\theoremstyle{definition}
\newtheorem{definition}{Definition}[section]
\newtheorem{ex}[definition]{Example}
\theoremstyle{plain}
\newtheorem{prop}[definition]{Proposition}
\newtheorem{lem}[definition]{Lemma}
\newtheorem{teo}[definition]{Theorem}
\newfont{\bbb}{msbm10 scaled\magstephalf}     
\def\C{\mathbb C}
\def\K{\mathbb K}
\def\R{\mathbb R}
\def\R{\mbox{\bbb R}}
\def\O{\mathcal O}
\def\A{\mathscr A}
\def\Aecod{\operatorname{\mbox{$\A_e$-cod}}}
\def\Keq{\mathscr{K}}
\def\Kecod{\operatorname{\mbox{$\Keq_e$-cod}}}
\def\Kecod{\operatorname{\mbox{$\mathscr{K}_e$-cod}}}
\newcommand{\tae}{T\A_e}
\newcommand{\nae}{N\A_e}
\newcommand{\tke}{T\mathscr{K}_e}
\newcommand{\nke}{N\mathscr{K}_e}
\newcommand{\fpdat}[3]{\left. \frac{\partial #1}{\partial #2} \right|_{#3}}
\newcommand{\dpar}[2]{\frac{\partial #1}{\partial #2}}
\def\m{\mathfrak{m}}
\title{A note on 1-parameter stable unfoldings}
\author{I. Breva Ribes, R. Oset Sinha}
\date{}
\address{Departament de Matem\`atiques,
Universitat de Val\`encia, Campus de Burjassot, 46100 Burjassot,
Spain}
\email{raul.oset@uv.es}
\email{igbreri@alumni.uv.es}
\thanks{Work of R. Oset Sinha partially supported by Grant PID2021-124577NB-I00 funded by MCIN/AEI/ 10.13039/501100011033 and by ``ERDF A way of making Europe"}
\subjclass[2000]{Primary 58K40; Secondary 58K20, 32S05} \keywords{1-parameter stable unfolding, versal unfolding}
\begin{document}
\begin{abstract}
We give two characterisations of when a map-germ admits a 1-parameter stable unfolding, one related to the $\mathscr K_e$-codimension and another related to the normal form of a versal unfolding. We then prove that there are infinitely many finitely determined map-germs of multiplicity 4 from $\K^3$ to $\K^3$ which do not admit a 1-parameter stable unfolding.
\end{abstract}

\maketitle

\emph{Dedicated to Maria Aparecida Soares Ruas (Cidinha) on the ocassion of her 75th birthday.}

\section{Introduction}

In classification problems of map-germs and in the study of their algebraic or topological invariants, having a 1-parameter stable unfolding is a desirable property.

\begin{definition}\label{def_opsu}
Let $f\colon(\K^n,0)\to(\K^p,0)$ be a smooth map-germ.
A 1-parameter stable unfolding (OPSU) of $f$ is a smooth map-germ $F\colon(\K^{n+1},0)\to(\K^{p+1},0)$ of the form $F(x,\lambda) = (f_\lambda(x),\lambda)$, with $f_0 = f$, which is stable as a map-germ.
\end{definition}

Notice that a germ may have arbitrarily high $\mathscr A_e$-codimension but still admit an OPSU. For example, the germs $f_k(x,y)=(x,y^3+x^{k+1}y)$ in Rieger's list (\cite{Rieger}) have $\mathscr A_e$-codimension $k$ but admit the OPSU $F_k(x,y,\lambda)=(x,y^3+x^{k+1}y+\lambda y,\lambda)$, which is a cuspidal edge in $\K^3$. Many papers throughout the literature need the hypothesis of a germ admitting an OPSU in their main theorems. For example, in \cite{NORW} it is the class of germs in which a method to calculate liftable vector fields is applicable, in Theorem 2.19 in \cite{GCNB} it is related to the Mond conjecture and in \cite{Houston} it is necessary for a characterisation of which map-germs are augmentations. However, it is not fully understood when a map-germ admits an OPSU or not. There are obvious constraints given by the maximum possible multiplicity of stable germs in one dimension above, i.e. multiplicity 6 germs from $\K^3$ to $\K^3$ cannot have OPSU since there is no stable germ of multiplicity 6 in $\C^4$ to $\C^4$. Amongst explicitly given classifications of corank 1 simple germs most germs seem to admit OPSU, for example, in Marar and Tari's classification of simple corank 1 germs from $\K^3$ to $\K^3$ (\cite{marartari}) all admit OPSU. However, this is not true in general, for example $(x,y^4+x^2y)$ in Rieger's list is simple but does not admit an OPSU. Also, the germs $H_k(x,y)=(x,y^3,y^{3k-1}+xy)$ in Mond's list (\cite{mond}) are simple of corank 1 and do not admit OPSU.

In this note we prove two characterisations of when germs admit an OPSU. The first one related to the $\mathscr K_e$-codimension is well known but we have not found a proof of it in the literature, so we include it for the sake of completeness. The second one is related to the form of a versal unfolding and we believe it can be useful in several different contexts. Our results are in fact more general and related to the existence of stable unfoldings, the statements for OPSUs come as corollaries. We then turn our attention to the case of corank 1 map-germs from $\K^3$ to $\K^3$. Any multiplicity 2 germ in these dimensions is stable, any multiplicity 3 germ will be of the form $(x,y,z^3+h(x,y,z))$ with $h\in \m_3$ and it admits an OPSU $(x,y,z^3+h(x,y,z)+\lambda z,\lambda)$. By \cite{marartari} all simple multiplicity 4 germs admit an OPSU too. We prove that there are infinitely many non-equivalent finitely determined map-germs of multiplicity 4 which do not admit an OPSU.

{{\it Acknowledgements:} The authors thank M. A. S. Ruas for constant encouragement and inspiration. In fact, all the ideas in this paper have been directly inspired by discussions with Ruas.}

\section{Preliminaries}

Let $\K$ be either $\C$ or $\R$.
Denote by $\O_d$ the local ring of germs of smooth functions in $d$ variables over $\K$, and denote its maximal ideal by $\m_d$.
We write $\theta_d = \O_d\times\overset{d}\cdots \times \O_d$.
If  $f\colon(\K^n,0)\to(\K^p,0)$ is a smooth (holomorphic or $C^\infty$ in $\C$ and $\R$, respectively) map-germ, then we define $\theta(f) = \O_n\times\overset{p}\cdots\times \O_n$.

Recall that if $\operatorname{Diff}(\K^d,0)$ is the group of germs of diffeomorphisms in $\K^d$, then the group $\A = \operatorname{Diff}(\K^n,0)\times\operatorname{Diff}(\K^p,0)$ acts on the set of map-germs $f\colon(\K^n,0)\to(\K^p,0)$ via the natural compositions.
The equivalence relation defined by the orbits of this action is called $\A$-equivalence.
We can assign to each $f$ the following spaces:
\begin{equation*}
\begin{aligned}
\tae f  & = tf(\theta_n) + \omega f (\theta_p), \\
\nae f  & = \frac{\theta(f)}{tf(\theta_n) + \omega f (\theta_p)}
\end{aligned}
\end{equation*}
where $tf$ acts as the composition with the differential $df$, and $\omega f$ acts as the pre-composition with $f$.
These spaces are respectively called the $\A_e$-tangent space to $f$ and the $\A_e$-normal space to $f$.

If $\Keq$ is the subgroup of $\operatorname{Diff}(\K^{n+p},0)$ given by diffeomorphisms of the form:
$$\Phi(x,y) = (\phi(x),\psi(x,y))$$
with $\psi(x,0) = 0$ for all $x$,
then $\Keq$ acts on map-germs $f\colon(\K^n,0)\to(\K^p,0)$ such that for each $\Phi\in\Keq$:
$$\Phi\cdot f (\phi(x)) = \psi(x,f(x))$$
The induced equivalence relation is called $\Keq$-equivalence or \textit{contact equivalence}, and we can also assign to each $f$ the following spaces:
\begin{equation*}
\begin{aligned}
\tke f  & = tf(\theta_n) + f^*\m_p\theta(f), \\
\nke f  & = \frac{\theta(f)}{tf(\theta_n) + f^*\m_p\theta(f)}
\end{aligned}
\end{equation*}
where $f^*\m_p$ is the ideal generated by the components of $f$ over $\O_n$.
These are called the $\Keq_e$-tangent space to $f$ and the $\Keq_e$-normal space to $f$.

We define the $\A_e$-codimension of $f$ as the dimension over $\K$ of $\nae f$, and we denote it by $\Aecod(f)$. A map-germ is said to be $\A$-finite if it has finite $\A_e$-codimension.
Being stable is equivalent to having $\A_e$-codimension equal to 0.
The $\Keq_e$-codimension is defined similarly, and if it is finite we say that $f$ has finite singularity type.

Being $\A$-finite is equivalent to being finitely $\A$-determined.
We recall that, in the analytic case, this means that for any $\A$-finite map-germ $f$ there exists a $d\in\mathbb N$ such that, if the Taylor expansion of another map-germ $g$ coincides with the expansion of $f$ up to degree $d$, then both map-germs are $\A$-equivalent.

The multiplicity of $f$ is the dimension over $\K$ of $\frac{\O_n}{f^*\m_p}$ and it is constant along the $\Keq$-orbit.
Finally, recall:
\begin{definition}
A smooth $f\colon(\K^n,0)\to(\K^p,0)$ is $\A$-simple if there exist a finite number of $\A$-equivalence classes such that, if the versal unfolding of $f$ admits a representative $F\colon U\to V $ with $U\subseteq \K^n\times\K^d, V\subseteq\K^p\times\K^d$, of the form $F(x,\lambda) = (f_\lambda(x),\lambda)$, for each $(y,\lambda)\in V$ the map-germ $f_\lambda\colon(\K^n,f_\lambda^{-1}(y))\to(\K^p,y)$ lies in one of those classes.
\end{definition}

We refer to \cite{nunomond} for more details on all these definitions.

\section{Minimal stable unfoldings}

In \cite{matherIV}, Mather gave a method to obtain stable mappings as unfoldings of rank 0 map-germs and proved that any stable germ can be obtained by that method. This method provided a recipe to construct a stable unfolding for any given map-germ of finite $\Keq_e$-codimension.
The procedure can be summarized as follows: if $f\colon(\K^n,0)\to(\K^p,0)$ is of rank $r$ and has finite $\Keq_e$-codimension, then it can be seen as the unfolding of some rank 0 map-germ, $f_0\colon(\K^{n-r},0)\to(\K^{p-r},0)$ of the same codimension $\Kecod(f_0) = m < \infty$.
Since $f_0$ is of rank 0, $\tke f_0\subseteq \m_{n-r}\theta(f_0)$.
Therefore, one can find $\gamma_1,\ldots,\gamma_{m-p+r}\in\theta(f_0)$ such that:
$$\frac{\m_{n-r}\theta(f_0)}{\tke f_0} = \operatorname{Sp}_\K\lbrace\gamma_1,\ldots,\gamma_{m-p+r}\rbrace$$
Mather's method now ensures that the unfolding:
$$\left( f(x) +\sum_{i=1}^{m-p+r} u_i\gamma_i,u_1,\ldots,u_{m-p+r}\right)$$
is stable.
Here the $\gamma_i$ are seen as vector fields in $\theta(f)$ via the natural inclusion.
For detailed explanations and proofs on the results that support this method, we suggest reading Section 7.2 of \cite{nunomond}.

This procedure has a minor inconvenience, namely that the number of parameters required to obtain the stable unfolding might be excessive.
The following example shows a na\"ive approach to this situation, but hints at what might happen when one is working with a more intricate map-germ:

\begin{ex}
Let $f(x,y) = (x,y^4 + xy)$.
We want to apply Mather's method in order to obtain a stable unfolding.
Here, $f$ can be seen as an unfolding of $f_0(y) = y^4$.
Now:
$$\frac{\m_1\theta(f_0)}{\tke f_0} = \operatorname{Sp}_\K\lbrace y, y^2\rbrace$$
Hence a stable unfolding of $f$ is:
$$F(x,y,u_1,u_2) = \left( x,y^4 + xy + u_1y+u_2y^2,u_1,u_2\right)$$
Clearly the term $u_1y$ is redundant, in the sense that we only needed to add the term $u_2y^2$ in order to obtain a stable unfolding (see the stable map-germs from $\K^3\to\K^3$ in page 210 in \cite{gibson}, for example).
\end{ex}

It is widely known that to avoid this problem of repetition one must take out of the computation those elements that, once seen as vector fields in $\theta(f)$, lie on the same  class in $\nke f$ as some constant vector field.
For instance, in our example, the vector field $(0,y)$ lies in the same class as $(-1,0)$ in $\nke f$, since $(1,y)$ is in $\tke f$.

We have not been able to find in the literature a published proof of this detail.
The following discussion aims to provide a formal proof for future reference.

In order to do this consider the following quotient, which is just a version of the $\Keq_e$-normal space with the constant vector fields added into the zero class:
$$N(f) = \frac{\theta(f)}{tf(\theta_n)+f^*\m_p\theta(f) + \omega f(\theta_p)}$$
This space was studied in depth by Ruas in her thesis (\cite{ruasthesis}), using it to provide criteria for determining when an $\A$-orbit is open on its $\Keq$-orbit.
As we will see, it is also the right tool to use when trying to build a stable unfolding with the minimal number of parameters.

\begin{definition}
We will say that a $d$-parameter stable unfolding $F$ of $f$ is a {\it minimal stable unfolding} if no other stable unfolding exists with less than $d$ parameters.
\end{definition}

Denote by $\dpar{}{X_1},\ldots,\dpar{}{X_p}$ the constant vector fields in $\theta(f)$.
Then we can also denote the constant vector fields of $\theta(F)$ by $\dpar{}{X_1},\ldots,\dpar{}{X_p},\dpar{}{U_1},\ldots,\dpar{}{U_d}$.
The following result can be found as Lemma 5.5 in \cite{nunomond}:
\begin{lem}\label{lema_k_isom}
For any unfolding $F(x,u) = (f_u(x),u)$ of $f$, there is an isomorphism $\beta\colon\nke F\to\nke f$ that takes the class of $\dpar{}{X_i}$ to the class in $\nke f$ of $\dpar{}{X_i}$ for each $i=1,\ldots,p$, and the class of $\dpar{}{U_j}$ to the class in $\nke f$ of $ \dot{F_j} = \fpdat{f_u}{u_j}{u=0}$ for each $j=1,\ldots,d$.
\end{lem}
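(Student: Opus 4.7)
The idea is to build $\beta$ as a reduction modulo $tF(\theta_{n+d})$ followed by restriction to the fibre $u=0$. Because $F(x,u) = (f_u(x), u)$, the vector field $tF(\dpar{}{u_j}) = \dpar{F}{u_j}$ lies in $tF(\theta_{n+d})$ and has a $1$ in its $(p+j)$-th coordinate together with $\dpar{f_u^{(i)}}{u_j}$ in its $i$-th coordinate for $i \le p$. Hence, for any $\xi = \sum_{i=1}^p \xi_i\dpar{}{X_i} + \sum_{j=1}^d \xi_{p+j}\dpar{}{U_j} \in \theta(F)$, subtracting $\sum_j \xi_{p+j}\,tF(\dpar{}{u_j})$ yields a representative of $[\xi]\in \nke F$ of the form $\xi' = \sum_i \eta_i\dpar{}{X_i}$ with no $\dpar{}{U_j}$-component. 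I define $\beta([\xi])$ to be the class in $\nke f$ of $\xi'|_{u=0}$.

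To see that $\beta$ descends to $\nke F$, note that $tF(\dpar{}{x_i})$ already has vanishing $\dpar{}{U_j}$-component (since $u_j$ does not depend on $x_i$) and restricts at $u=0$ to $tf(\dpar{}{x_i}) \in tf(\theta_n)$. Similarly, $F^*\m_{p+d}$ is generated in $\O_{n+d}$ by $F_1,\dots,F_p,u_1,\dots,u_d$; at $u=0$ the $u_j$ vanish and the $F_k$ become $f_k$, so the image of $F^*\m_{p+d}\theta(F)$ lies in $f^*\m_p\theta(f)$. Hence $\beta$ is well-defined.

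For the inverse I would extend $\eta = \sum_i \eta_i(x)\dpar{}{X_i} \in \theta(f)$ trivially to $\tilde\eta \in \theta(F)$ with $\tilde\eta_i(x,u) = \eta_i(x)$ and vanishing $\dpar{}{U_j}$-components, and send $[\eta]$ to $[\tilde\eta]$. Well-definedness reduces to lifting generators of $\tke f$ back into $\tke F$: $tf(\dpar{}{x_i})$ is the restriction of $tF(\dpar{}{x_i})$, and any generator $f_k\dpar{}{X_i}$ of $f^*\m_p\theta(f)$ differs from $F_k\dpar{}{X_i} \in F^*\m_{p+d}\theta(F)$ only by $(F_k-f_k)\dpar{}{X_i}$, which lies in $\langle u_1,\dots,u_d\rangle\theta(F) \subseteq F^*\m_{p+d}\theta(F)$. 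A direct check shows this map is the two-sided inverse of $\beta$. For the images of the distinguished constant vector fields, $\dpar{}{X_i}$ is already reduced and restricts to itself, while $\dpar{}{U_j}$ reduces to $-\sum_i \dpar{f_u^{(i)}}{u_j}\dpar{}{X_i}$, which restricts at $u=0$ to (a sign-convention-dependent multiple of) $\dot F_j$, matching the statement.

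The only delicate point is the well-definedness of the inverse: lifting elements of $f^*\m_p$ via $f_k \mapsto F_k$ produces extra terms of order $u$ which must be absorbed using the remaining generators $u_1,\dots,u_d$ of $F^*\m_{p+d}$. This absorption makes crucial use of the full ideal $F^*\m_{p+d}$ and is what forces us to pass to the quotient; everything else is routine book-keeping.
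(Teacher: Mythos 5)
The paper does not actually prove this lemma --- it is quoted verbatim from Lemma 5.5 of \cite{nunomond} --- so there is no in-paper argument to compare against; judged on its own, your construction is correct and is essentially the standard proof. The key moves are all right: since $tF(\dpar{}{u_j})$ has a unit in the $U_j$-slot and zeros in the other $U_k$-slots, every class in $\nke F$ has a representative with no $\dpar{}{U_j}$-components, and then setting $u=0$ lands you in $\theta(f)$; well-definedness on $\tke F$ follows by checking the generators $tF(\dpar{}{x_i})$, $tF(\dpar{}{u_j})$ (which your reduction kills outright --- worth saying explicitly) and $F^*\m_{p+d}\theta(F)$, using $F_k|_{u=0}=f_k$ and $u_j|_{u=0}=0$. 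Two small points to tighten. First, for the inverse you write that ``$tf(\dpar{}{x_i})$ is the restriction of $tF(\dpar{}{x_i})$''; that is the statement you need for the \emph{forward} map. For the inverse you need that the trivial extension of $a(x)\,tf(\dpar{}{x_i})$ lies in $\tke F$, and this requires exactly the same $\langle u_1,\dots,u_d\rangle$-absorption you correctly describe for $f_k\mapsto F_k$, since $\partial F_k/\partial x_i - \partial f_k/\partial x_i$ only vanishes modulo $(u_1,\dots,u_d)\subseteq F^*\m_{p+d}$; the fix is one line. Second, the sign: your $\beta$ genuinely sends $[\dpar{}{U_j}]$ to $[-\dot F_j]$, not $[\dot F_j]$, and you are right to flag it --- this is a discrepancy with the statement as printed, but it is immaterial, because every use of the lemma in the paper (e.g.\ in Theorem \ref{thm_dpsu}) only invokes the \emph{span} of the images of the constant vector fields, and one can in any case reparametrise $u_j\mapsto -u_j$ to match the stated sign. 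With those two touch-ups the proof is complete.
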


Denote by $c(f)$ the number of constant vector fields $\dpar{}{X_j}$ which do not belong to $\tke f$.
\begin{teo}\label{thm_dpsu}
Let $f\colon(\K^n,0)\to(\K^p,0)$ be a smooth map-germ with finite
$\Keq_e$-codimension. Then, $d:=\dim_\K N(f) = \Kecod(f) - c(f)$. Moreover,
if the classes of $\gamma_1,\ldots,\gamma_d\in\theta(f)$ form a
$\K$-basis of the quotient $N(f)$, then:
$$F(x,u) = \left(f(x) + \sum_{i=1}^d
u_i\gamma_i,u_1,\ldots,u_d\right)$$
is a minimal stable unfolding of $f$.
In particular, $f$ admits an OPSU if and only if $\dim_\K N(f) = 1$.
\end{teo}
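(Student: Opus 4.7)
The plan is to begin by identifying $N(f)$ with the $\A_e$-normal space $\nae f$. Writing $\theta_p = \m_p\theta_p + \operatorname{Sp}_\K\lbrace\dpar{}{X_1},\ldots,\dpar{}{X_p}\rbrace$ and observing that $\omega f$ fixes constants while sending $\m_p\theta_p$ onto $f^*\m_p\theta(f)$, one obtains
$$\omega f(\theta_p) = f^*\m_p\theta(f) + \K^p,$$
where $\K^p\subset\theta(f)$ denotes the subspace of constant vector fields. Hence $\tae f = \tke f + \K^p$ and
$$N(f) = \theta(f)/\tae f = \nae f = \nke f / W,$$
where $W$ is the image of $\K^p$ in $\nke f$. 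Interpreting $c(f)$ intrinsically as $\dim_\K W$ (equivalently, $p$ minus the $\K$-dimension of $\K^p \cap \tke f$), the identity $\dim_\K N(f) = \Kecod(f) - c(f)$ is immediate.

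For the stability of $F(x,u) = (f(x)+\sum_i u_i\gamma_i, u)$, I would apply Lemma \ref{lema_k_isom} to $F$ itself. The isomorphism $\beta\colon\nke F \to \nke f$ sends $[\dpar{}{X_i}]$ to $[\dpar{}{X_i}]$ and $[\dpar{}{U_j}]$ to $[\dot F_j]=[\gamma_j]$, and so it identifies the image in $\nke F$ of all constant vector fields with $\K^p + \operatorname{Sp}_\K\lbrace[\gamma_1],\ldots,[\gamma_d]\rbrace \subseteq \nke f$. By hypothesis $\lbrace[\gamma_j]\rbrace$ is a $\K$-basis of $N(f) = \nke f / W$, so this sum equals all of $\nke f$. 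Therefore $N(F)=0$, which by the first paragraph applied to $F$ yields $\nae F = 0$: thus $F$ is infinitesimally stable, hence stable by Mather's theorem.

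Minimality follows by running the argument backwards. Any other stable $d'$-parameter unfolding $F'$ of $f$ satisfies $\nae F' = N(F') = 0$, and Lemma \ref{lema_k_isom} applied to $F'$ then forces the classes $[\dot F'_j]$ to generate $N(f)$; hence $d'\geq\dim_\K N(f) = d$. The OPSU characterisation is an immediate consequence: a $1$-parameter stable unfolding exists precisely when $N(f)$ is generated by a single class, i.e.\ when $\dim_\K N(f) = 1$ (the degenerate case $\dim_\K N(f)=0$ corresponds to $f$ itself being stable).

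The step that really needs care is the identification $N(f) = \nae f$, since this is what allows Lemma \ref{lema_k_isom}, a $\Keq_e$-theoretic statement, to control the $\A$-theoretic notion of stability of the unfolding; after that, everything reduces to spanning arguments inside $\nke f$.
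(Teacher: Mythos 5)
Your overall architecture is the same as the paper's (realise $N(f)$ as $\nke f$ modulo the constant vector fields, transport everything to $\nke F$ via Lemma \ref{lema_k_isom}, and run a spanning argument), but the identity you build the stability step on, namely $\omega f(\theta_p)=f^*\m_p\theta(f)+\K^p$, hence $\tae f=\tke f+\K^p$ and $N(f)=\nae f$, is false. Only the inclusion $\omega f(\theta_p)\subseteq f^*\m_p\theta(f)+\K^p$ holds: an element of $f^*\m_p\theta(f)$ has components in the ideal $f^*\m_p\cdot\O_n$ but need not be of the form $a\circ f$ plus a constant (already for $f(x)=x^2$, the element $x^3\in f^*\m_1\theta(f)$ is not $a(x^2)+c$). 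The correct statement is $\tae f+f^*\m_p\theta(f)=\tke f+\K^p$, so $N(f)$ is in general a \emph{proper} quotient of $\nae f$: the germs $f_k(x,y)=(x,y^3+x^{k+1}y)$ from the introduction have $\Aecod(f_k)=k$ while $\dim_\K N(f_k)=1$.

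This error matters at exactly one point, but it is the crucial one: you deduce stability of $F$ from $N(F)=0$ by passing through ``$\nae F=0$'', and that passage is where the false identity is invoked. The implication ``$N(F)=0$ and finite singularity type $\Rightarrow F$ stable'' is true, but it is not a tangent-space identity; it requires the Malgrange preparation theorem (from $\theta(F)=\tke F+\K^{p+d}$ one deduces that $\theta(F)/tF(\theta_{n+d})$ is generated over $\O_{p+d}$ by the constant fields, whence $\tae F=\theta(F)$). This is precisely the content of Theorem 4.1 of \cite{nunomond}, which the paper cites at this point instead of your identity. The rest of your argument survives because it only uses the true inclusion: the count $\dim_\K N(f)=\Kecod(f)-c(f)$ needs only $N(f)=\nke f/W$ (together with your --- reasonable --- reading of $c(f)$ as $\dim_\K W$), and the minimality argument needs only that stability of $F'$ forces $N(F')=0$, which holds because the denominator of $N(F')$ contains $\tae F'$. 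Replace the identification $N=\nae$ by the preparation-theorem step (or the citation) and the proof closes.
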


\begin{proof}
The following argument is similar to the one found in Proposition 7.1 from \cite{nunomond}, but taking into account that $f$ is not of rank 0.

Notice that the only elements in $\omega f(\theta_p)$ that do not belong to $f^*\m_p\theta(f)$ are those from the subspace $\operatorname{Sp}_\K\left\lbrace\dpar{}{X_1},\ldots,\dpar{}{X_p}\right\rbrace$. Therefore $N(f)$ is effectively $\nke f$ but adding all the constants into the zero class.
Hence, the first equality is clear.

This also means that $\nke f$ is contained in the linear space
generated over $\K$ by the classes of $\dpar{}{X_1},\ldots,\dpar{}{X_p}$,
$\gamma_1,\ldots,\gamma_d$.
As a remark, since $f$ is of arbitrary rank, the class of some
$\dpar{}{X_i}$ might be zero over $\nke f$, and so these classes might not
form a system of generators.
Now, by Lemma \ref{lema_k_isom}, we have that $\nke F$ is contained
in the linear space generated over $\K$ by the classes of
$\dpar{}{X_1},\ldots,\dpar{}{X_p},\dpar{}{U_1},\ldots,\dpar{}{U_d}$.
Again, some of the $\dpar{}{X_i}$ might be in the zero class, but we
still have:
$$\tke F +
\operatorname{Sp}_\K\left\lbrace\dpar{}{X_1},\ldots,\dpar{}{X_p},\dpar{}{U_1},\ldots,\dpar{}{U_d}\right\rbrace = \theta(F)$$
Hence, $F$ is stable (see Theorem 4.1 from \cite{nunomond}).
No stable unfolding with less parameters can exist, since the image of the $\dpar{}{U_1},\ldots,\dpar{}{U_d}$ via the isomorphism must project to a basis of $N(f)$ for the unfolding to be stable.

\end{proof}

The $\Keq_e$-codimension is constant in a $\Keq$-orbit, so one could be inclined to think that having an OPSU depends only on the $\Keq$-orbit (i.e. on the multiplicity). However, $c(f)$ is not, as the following example shows (see also Section \ref{mult4}).

\begin{ex}
In Rieger's classification \cite{Rieger} amongst the multiplicity 4 germs, $(x,y^4+xy)$ and $(x,y^4+xy^2+y^{2k+1})$ ($k>1$) admit OPSUs, and $(x,y^4+x^2y+y^5)$ and $(x,y^4+x^2y)$ don't. It can be seen that $c(f)=2$ for the first two, but for the last two $(1,0)\in\tke f$ and $c(f)=1$.
\end{ex}

We finish the section by giving a normal form of the versal unfolding of map-germs that admit stable unfolding, involving $N(f)$.
Part of the argument in the following proof can be found as the first step in the proof of Theorem 5.1 from Ruas' thesis (\cite{ruasthesis}), which was later published in \cite{riegerruas}.
It is used there to obtain a sufficient condition (equivalent in the article) for the $\A$-orbit of a germ with finite singularity type to be open on its $\Keq$-orbit.
The extra hypothesis of finite $\A_e$-codimension allows to prove another result:

\begin{prop}
If $f\colon(\K^n,0)\to(\K^p,0)$ has finite $\A_e$-codimension $k$,
and $N(f) = \operatorname{Sp}_\K\lbrace \gamma_1,\ldots, \gamma_d
\rbrace$ for some $\gamma_i\in\theta(f)$, then:
$$\nae f =\operatorname{Sp}_\K\left\lbrace\gamma_1,\ldots,\gamma_d,
\sum_{i=1}^d p_i^1(f(x))\gamma_i,\ldots,\sum_{i=1}^dp_i^{k-d}(f(x))
\gamma_i\right\rbrace$$
for some $p_i^j\in\m_p$.
In particular,  $f$ admits an OPSU if and only if it admits a versal
unfolding of the form:
$$\left( f(x) + \left(\lambda_1 + \sum^{k}_{i=2}\lambda_ip_i(f(x))
\right)\gamma_1(x),\lambda \right)$$
for some $p_2,\ldots,p_k\in\m_p$.
\end{prop}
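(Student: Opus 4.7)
The plan is to view $\nae f$ as an $\O_p$-module via the pullback $f^*$, identify the reduction $\nae f / f^*\m_p\cdot\nae f$ with $N(f)$, and then let Nakayama's lemma do the work of lifting the $\K$-basis $\gamma_1,\ldots,\gamma_d$ of $N(f)$ to an $\O_p$-generating set of $\nae f$. The prescribed basis will then fall out by decomposing an arbitrary element $\sum h_i(f)\gamma_i\in\nae f$ into its constant and higher-order parts in $\O_p$.

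To set this up, I would first check that both $tf(\theta_n)$ and $\omega f(\theta_p)$ are closed under multiplication by $f^*\O_p$: $tf$ is $\O_n$-linear, and $\omega f(h\zeta)=f^*(h)\omega f(\zeta)$. Hence $\nae f$ inherits an $\O_p$-module structure, and comparing denominators gives $\nae f/f^*\m_p\cdot\nae f\cong N(f)$. Finite $\A_e$-codimension makes $\nae f$ finite-dimensional over $\K$ and a fortiori a finitely generated $\O_p$-module, so Nakayama's lemma applies and $\gamma_1,\ldots,\gamma_d$ generate $\nae f$ as an $\O_p$-module. Writing any element as $\sum h_i(f)\gamma_i$ and splitting $h_i=h_i(0)+q_i$ with $q_i\in\m_p$ yields $\nae f=\operatorname{Sp}_\K\{\gamma_1,\ldots,\gamma_d\}+f^*\m_p\cdot\nae f$; the sum is direct because the $\gamma_i$ are linearly independent modulo $f^*\m_p\cdot\nae f$ by hypothesis. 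Since $\dim_\K\nae f=k$, the complement $f^*\m_p\cdot\nae f$ has $\K$-dimension $k-d$, and every one of its elements is of the form $\sum_i p_i(f)\gamma_i$ with $p_i\in\m_p$; choosing a $\K$-basis there gives the $k-d$ vectors $\sum_i p_i^j(f)\gamma_i$ that complete the basis of $\nae f$.

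For the ``in particular'' statement, combine the basis above (with $d=1$) with the standard fact that a versal unfolding of $f$ has the form $F(x,\lambda)=(f(x)+\sum_i\lambda_i\eta_i(x),\lambda)$ for any $\K$-basis $\{\eta_i\}$ of $\nae f$. With $d=1$ the basis reduces to $\gamma_1, p_1^1(f)\gamma_1,\ldots,p_1^{k-1}(f)\gamma_1$, and factoring $\gamma_1$ out of the sum produces the asserted form after relabelling $p_j:=p_1^{j-1}$. Conversely, a versal unfolding of the given form exhibits a basis of $\nae f$ whose elements are all $\O_p$-multiples of $\gamma_1$, so $\gamma_1$ alone generates $\nae f$ as an $\O_p$-module, its class spans $N(f)$, and $\dim_\K N(f)\le 1$; Theorem \ref{thm_dpsu} then delivers the equivalence with the existence of an OPSU.

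The main technical hurdle is the opening move---recognising that $\nae f$ carries the $\O_p$-module structure that makes $N(f)$ its reduction modulo $f^*\m_p$, so that Nakayama's lemma rather than some explicit computation provides the lift. Once that algebraic picture is in place, the remainder is finite-dimensional bookkeeping in $\nae f$ together with a symbolic rearrangement of the versal unfolding.
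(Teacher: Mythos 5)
Your proof is correct, and its skeleton coincides with the paper's: identify $N(f)$ as the reduction of $\nae f$ modulo $f^*\m_p$, lift $\gamma_1,\ldots,\gamma_d$ to $\O_p$-module generators of $\nae f$, complete the $\K$-basis by elements of $f^*\m_p\cdot\operatorname{Sp}_\K\lbrace\gamma_1,\ldots,\gamma_d\rbrace$, and then specialise to $d=1$ via the standard normal form of a miniversal unfolding. The one genuine difference is the key lemma used for the lifting step: the paper invokes the Malgrange Preparation Theorem (imported from the proofs in Ruas' thesis and Rieger--Ruas, where the hypothesis is only finite singularity type), whereas you observe that the extra hypothesis $\Aecod(f)=k<\infty$ makes $\nae f$ finite-dimensional over $\K$, hence automatically a finitely generated $\O_p$-module, so that ordinary Nakayama over the local ring $\O_p$ suffices. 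This is a legitimate and slightly more self-contained route --- the whole content of Malgrange here is precisely the finite generation over $\O_p$ that your hypothesis hands you for free --- and your explicit verification that $\tae f$ is an $f^*\O_p$-submodule, together with the direct-sum bookkeeping $\nae f=\operatorname{Sp}_\K\lbrace\gamma_1,\ldots,\gamma_d\rbrace\oplus f^*\m_p\cdot\nae f$, makes the dimension count $k-d$ cleaner than the paper's ``complete a basis'' phrasing. One cosmetic point in the converse: you conclude $\dim_\K N(f)\le 1$, and to apply Theorem \ref{thm_dpsu} as stated you should note that $k\ge 1$ forces $\nae f\ne 0$, whence (again by Nakayama) $N(f)\ne 0$ and the dimension is exactly $1$.
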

\begin{proof}

First notice that:
$$N(f) = \frac{\theta(f)}{\tae f + f^*\m_p\theta(f)}$$
Hence, since $\gamma_1,\ldots,\gamma_d$ are a $\K$-basis of $N(f)$,
they must form a $\K$-independent set over $\nae f$.
Since $\Aecod(f) = k < \infty$, there must exist some
$\gamma_{d+1},\ldots,\gamma_{k}$ that complete a basis of $\nae f$.

We can also use the Malgrange Preparation Theorem (see the proofs mentioned above from \cite{ruasthesis, riegerruas}) to see that $\nae f$ is generated by
$\gamma_1,\ldots,\gamma_d$ as an $\O_p$-module via $f$.
Therefore, each $\gamma_{d+1},\ldots,\gamma_k$ must be in the class
of some element in
$f^*\m_p\operatorname{Sp}_\K\lbrace\gamma_1,\ldots,\gamma_d\rbrace$ over $\nae f$, giving a basis in the required
form.

In particular, if $d =1$ there must exist $p_2,\ldots,p_k\in\m_p$ so
that $\gamma_i(x) = p_i(f(x))\gamma_1(x)$ for each $i=2,\ldots,k$,
and so a versal unfolding in the required form can be constructed.
The converse is also true, since finding a versal unfolding of this
form implies that $\nae f = \operatorname{Sp}_\K\lbrace\gamma_1,p_2(f(
x))\gamma_1,\ldots,p_k(f(x))\gamma_1\rbrace$.
As $p_i(f(x))\gamma_1\in f^*\m_p\theta(f)$ for each $i=2,\ldots,k$,
we have that $\dim_\K N(f) = \dim_\K\left(\operatorname{Sp}_\K\lbrace\gamma_1(x)\rbrace\right) = 1$ and so  by Theorem
\ref{thm_dpsu} $f$ admits an OPSU of the form $(f(x) + \lambda\gamma_1(x),\lambda)$.
\end{proof}

In spite of not knowing explicitly who the $p_i$ are, this result can be of great interest when dealing with equivalence of unfoldings.

\begin{ex}
Suppose you are given a germ and its versal unfolding, for example $P_2:(x,y,z^5+xz,z^3+yz)$ in Houston and Kirk's list \cite{houstonkirk} and its versal unfolding $(x,y,z^5+xz+\lambda_1z^2+\lambda_2xz^2,z^3+yz,\lambda_1,\lambda_2)$. An advanced reader might realise that $P_2$ is a 1-parameter unfolding of $H_2$ in Mond's list \cite{mond}, which has $\A_e$-codimension 2, so the versal unfolding of $H_2$ should be an OPSU of $P_2$. However, with less background on existing classifications, knowing whether $P_2$ admits an OPSU or not is a priori not direct. Now, the versal unfolding can be written as $(P_2,\lambda_1,\lambda_2)+(\lambda_1+\lambda_2x)(0,0,z^2,0,0,0)$, and by the above result, this implies that $(P_2,\lambda)+\lambda(0,0,z^2,0,0)=(x,y,z^5+xz+\lambda z^2,z^3+yz,\lambda)$ is an OPSU.
\end{ex}

\section{Corank 1 multiplicity 4 germs from $\K^3$ to $\K^3$.}\label{mult4}

All simple germs of multiplicity 4 in Marar and Tari's list \cite{marartari} admit an OPSU. In fact the germs $4_1^k:(x,y,z^4+xz+y^kz^2)$ and $4_2^k:(x,y,z^4+(y^2+x^k)z+xz^2)$ are augmentations and any augmentation admits an OPSU (see \cite{BROS} for details on augmentations and their simplicity). It is natural to ask whether there are any other finitely determined germs in this $\Keq$-orbit which do not admit OPSU.

\begin{teo}
There are infinitely many non $\A$-equivalent finitely determined germs of multiplicity 4 in $\C^3$ to $\C^3$ which do not admit an OPSU.
\end{teo}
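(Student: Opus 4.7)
The strategy is to exhibit an explicit infinite family and verify the four required properties in turn. I would take
$$f_k(x,y,z) \;=\; (x,\, y,\, z^4 + x^2 z + y^k z^2), \qquad k\geq 2.$$
The multiplicity of $f_k$ is $\dim_\C \O_3/(x,y,\, z^4+x^2z+y^kz^2) = \dim_\C \C[z]/(z^4) = 4$, so the family consists of multiplicity-$4$ germs.

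First I would show, via Theorem \ref{thm_dpsu}, that $\dim_\C N(f_k) = 2$ and hence no OPSU exists. Writing $F = z^4+x^2z+y^kz^2$, Smith normal form applied to the Jacobian matrix of $f_k$ identifies $\theta(f_k)/tf_k(\theta_3) \cong \O_3/(F_z)$, and then quotienting by $f_k^*\m_3\theta(f_k)$ yields
$$\nke f_k \;\cong\; \O_3/(x,y,F,F_z) \;\cong\; \C[z]/(z^3),$$
since $F\equiv z^4$ and $F_z\equiv 4z^3$ modulo $(x,y)$. Under this identification the constant vector fields $e_1,e_2,e_3\in\theta(f_k)$ correspond respectively to $-F_x,-F_y,1$; since $F_x=2xz$ and $F_y=ky^{k-1}z^2$ both lie in $(x,y)$, only $e_3$ has nonzero class. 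Hence $c(f_k)=1$ and $\dim N(f_k)=3-1=2$, so Theorem \ref{thm_dpsu} forbids an OPSU.

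The same computation gives $\A$-finiteness: every element of $\omega f_k(\theta_3)$ projects into $\C\cdot 1\subset \C[z]/(z^3)$ (since $x,y,F$ all vanish in $\nke f_k$), so $\nae f_k \cong \C[z]/(z^3)/\C\cdot 1$ has dimension $2$. In particular each $f_k$ is finitely determined.

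The main obstacle is pairwise non-$\A$-equivalence. Here I would use that any $\A$-equivalence $f_k\sim_\A f_{k'}$ carries the critical set $\Sigma^1(f_k)=\{F_z=0\}\subset \C^3$ biholomorphically onto $\Sigma^1(f_{k'})$ as hypersurface germs at $0$, so the Milnor number of $\Sigma^1(f_k)$ is an $\A$-invariant of $f_k$. The polynomial $F_z = 4z^3+x^2+2y^k z$ is weighted homogeneous of degree $d=6k$ with weights $(w_x,w_y,w_z)=(3k,4,2k)$, and a short check of its partials $2x$, $2ky^{k-1}z$, $12z^2+2y^k$ shows that the origin is the only critical point. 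The Milnor--Orlik formula then yields
$$\mu(\Sigma^1(f_k)) \;=\; \prod_i \frac{d-w_i}{w_i} \;=\; \frac{(3k)(6k-4)(4k)}{(3k)(4)(2k)} \;=\; 3k-2,$$
which is pairwise distinct for distinct $k\geq 2$. Thus the $f_k$ form an infinite family of pairwise non-$\A$-equivalent finitely determined multiplicity-$4$ germs with no OPSU, proving the theorem.
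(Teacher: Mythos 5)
Your computation of $N(f_k)$ and the conclusion that no OPSU exists is fine, and the Milnor-number-of-the-critical-set idea for distinguishing the germs is a legitimate (and different from the paper's) invariant. The fatal problem is the finite determinacy step. Your argument for it is logically invalid: $\omega f(\theta_p)$ consists only of vector fields whose coefficients are \emph{pullbacks} of functions on the target, so $\tae f$ is in general much smaller than $\tke f + \operatorname{Sp}_\K\{\partial/\partial X_i\}$; what you have computed is $\dim N(f_k)=2$ and $\Kecod(f_k)=3$, i.e.\ finite singularity type, which does not imply finite $\A_e$-codimension (compare $(x,y^3+x^ky)$, which has $\Kecod=2$ and $\dim N=1$ for every $k$ but $\Aecod=k$). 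Worse, the conclusion itself is false: $f_k=(x,y,z^4+x^2z+y^kz^2)$ is \emph{not} finitely determined. Over each point $(0,y_0)$ with $y_0\neq 0$ the critical values of $z\mapsto z^4+y_0^kz^2$ at $z_\pm$ with $z_\pm^2=-y_0^k/2$ coincide, giving a double fold point; the two discriminant sheets there are both tangent to the plane spanned by $(1,0,0)$ and $(0,1,ky_0^{k-1}z_\pm^2)$, and since $z_+^2=z_-^2$ these tangent planes agree. So $f_k$ has a whole curve of non-transverse (hence unstable) double folds, and by Mather--Gaffney it has infinite $\A_e$-codimension. The same obstruction shows up algebraically in the du Plessis/Marar--Tari criterion quoted at the end of the paper: with $P=x^2$, $Q=y^k$, every generator of the relevant tangent space has first component in the ideal $(P,P_x,P_y)=(x)$, so the $\mathscr G_e$-codimension of $(P,Q)$ is infinite. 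Roughly speaking, any family $(x,y,z^4+Pz+Qz^2)$ with $P\in\m_2^2$ too degenerate in this sense cannot work.

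For contrast, the paper avoids constructing explicit normal forms altogether: it takes a generic homogeneous map of degrees $(1,1,p)$, invokes Farnik--Jelonek--Ruas to get finite determinacy, adds $z^4$ using the Bruce--Ruas--Saia determinacy lemma to force multiplicity $4$, and distinguishes the resulting germs $f_{p_1}, f_{p_2}$ by showing the degree-$p$ monomials cannot all be removed by $\A$-equivalence. If you want to keep an explicit family, you would need one where $P_x,P_y$ (together with $P$ and the target-induced terms) generate an ideal of finite colength, e.g.\ perturbations of the type used in the paper's final example, and then verify both finite determinacy and $\dim N(f)\neq 1$ for each member.
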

\begin{proof}
Let $H(d_1,d_2,d_3)$ the set of all homogeneous polynomial mappings $F =(f_1,f_2,f_3):\C^3\to\C^3$, such that
$deg f_i = d_i$. By Theorem 1.1 in \cite{FJR}, if $gcd(d_1,d_2,d_3)=1$ and $gcd(d_i,d_j)\leq 2$ for $1\leq i\leq j\leq 3$, then there exists a non-empty Zariski open subset $U\subset H(d_1,d_2,d_3)$ such that for every mapping
$F\in U$ the map germ $(F,0)$ is finitely $\A$-determined. Taking $(d_1,d_2,d_3)=(1,1,p)$ with $p>4$, we then have that there exists a finitely determined map-germ with homogeneous entries of those degrees in each component. By linear changes of coordinates in the source, this germ will be equivalent to $(x,y,\phi_p(x,y,z))$, where $\phi_p$ is a homogeneous polynomial of degree $p$ in the variables $x,y,z$.

Now, by Lemma 1.2 in \cite{BRS}, if we add polynomials in each component of degrees strictly less than $(1,1,p)$, the resulting germ will be finitely determined. So we have that $f_p(x,y,z)=(x,y,\phi_p(x,y,z))+(0,0,z^4)=(x,y,z^4+\phi_p(x,y,z))$ is finitely determined. Notice that it has multiplicity 4 for any $p$.

Next, taking $p_1\neq p_2$, we show that $f_{p_1}$ is not $\mathscr A$-equivalent to $f_{p_2}$. Notice that $f_p(x,y,z)$ can be written as $$(x,y,z^4+\tilde\phi_p(x,y,z)z^4+\phi^1_p(x,y)z+\phi^2_p(x,y)z^2+\phi^3_p(x,y)z^3)=$$
$$(x,y,(1+\tilde\phi_p(x,y,z))z^4+\phi^1_p(x,y)z+\phi^2_p(x,y)z^2+\phi^3_p(x,y)z^3),$$
for $\tilde\phi_p,\phi^1_p,\phi^2_p,\phi^3_p$ homogeneous polynomials of degrees $p-4,p-1,p-2$ and $p-3$, respectively. Notice that $(1+\tilde\phi_p(x,y,z))$ is a unity. Any change of coordinates in the source will mantain some monomial of degree $p$. On the other hand, the only way to eliminate monomials of degree $p$ with changes of coordinates in the target is with the change $Z\to Z-\psi(X,Y)Z$, where $(X,Y,Z)$ are the coordinates in the target and $\psi(X,Y)$ is a polynomial of degree $p-4$. However, any monomial in $\psi(X,Y)Z$ of degree $p$ will have $z^4$ and the monomials of degree $p$ in $f_p$ have $z,z^2$ or $z^3$, so there is no way of eliminating all the monomials of degree $p$ by $\A$-equivalence. Hence $f_{p_1}$ is not $\mathscr A$-equivalent to $f_{p_2}$ if $p_1\neq p_2$.
\end{proof}

\begin{ex}
In \cite{marartari} a method to classificate corank 1 simple germs in the equidimensional case due to du Plessis is given. Any such germ is $\A$-equivalent to a germ of the form $(x_1,\ldots,x_{n-1},x_n^{n+1}+\sum_{i=1}^{n-1}P_i(x_1,\ldots,x_{n-1})x_n^i)$. Two such germs are $\A$-equivalent if and only if the corresponding $(P_1,\ldots,P_{n-1})$ are $\mathscr G$-equivalent, where $\mathscr G$ is a subgroup of $\Keq$. The $\A_e$-codimension of the germ is equal to the $\mathscr G_e$-codimension of $(P_1,\ldots,P_{n-1})$, and following \cite{marartari}, for multiplicity 4 germs of type $(x,y,z^4+P(x,y)z+Q(x,y)z^2)$, this is given by the codimension in $\O_2^2$ of the space generated by $\{(3P,2Q),(P_x,Q_x),(P_y,Q_y),(-4PQ^2,9P^2)-Q^2(3P,2Q),(-2PQQ_x,3PP_x),(-2PQQ_y,3PP_y)\}$.

The corresponding calculation shows that the germ $(x,y,z^4+(x^2-y^2)z+y^2z^2)$ has $\A_e$-codimension 4 and is therefore finitely determined. However, by Theorem \ref{thm_dpsu} it does not admit an OPSU.
\end{ex}

\section{Declarations}

On behalf of all authors, the corresponding author states that there is no conflict of interest.

\end{document}